\renewcommand\eqref[1]{(\ref{#1})}
\newcommand*{\mint}[1]{%
  \mint@l{#1}{}%
}
\newcommand*{\mint@l}[2]{%
  \@ifnextchar\limits{%
    \mint@l{#1}%
  }{%
    \@ifnextchar\nolimits{%
      \mint@l{#1}%
    }{%
      \@ifnextchar\displaylimits{%
        \mint@l{#1}%
      }{%
        \mint@s{#2}{#1}%
      }%
    }%
  }%
}
\newcommand*{\mint@s}[2]{%
  \@ifnextchar_{%
    \mint@sub{#1}{#2}%
  }{%
    \@ifnextchar^{%
      \mint@sup{#1}{#2}%
    }{%
      \mint@{#1}{#2}{}{}%
    }%
  }%
}
\def\mint@sub#1#2_#3{%
  \@ifnextchar^{%
    \mint@sub@sup{#1}{#2}{#3}%
  }{%
    \mint@{#1}{#2}{#3}{}%
  }%
}
\def\mint@sup#1#2^#3{%
  \@ifnextchar_{%
    \mint@sup@sub{#1}{#2}{#3}%
  }{%
    \mint@{#1}{#2}{}{#3}%
  }%
}
\def\mint@sub@sup#1#2#3^#4{%
  \mint@{#1}{#2}{#3}{#4}%
}
\def\mint@sup@sub#1#2#3_#4{%
  \mint@{#1}{#2}{#4}{#3}%
}
\newcommand*{\mint@}[4]{%
  \mathop{}%
  \mkern-\thinmuskip
  \mathchoice{%
    \mint@@{#1}{#2}{#3}{#4}%
        \displaystyle\textstyle\scriptstyle
  }{%
    \mint@@{#1}{#2}{#3}{#4}%
        \textstyle\scriptstyle\scriptstyle
  }{%
    \mint@@{#1}{#2}{#3}{#4}%
        \scriptstyle\scriptscriptstyle\scriptscriptstyle
  }{%
    \mint@@{#1}{#2}{#3}{#4}%
        \scriptscriptstyle\scriptscriptstyle\scriptscriptstyle
  }%
  \mkern-\thinmuskip
  \int#1%
  \ifx\\#3\\\else_{#3}\fi
  \ifx\\#4\\\else^{#4}\fi
}
\newcommand*{\mint@@}[7]{%
  \begingroup
    \sbox0{$#5\int\m@th$}%
    \sbox2{$#5\int_{}\m@th$}%
    \dimen2=\wd0 %
    \let\mint@limits=#1\relax
    \ifx\mint@limits\relax
      \sbox4{$#5\int_{\kern1sp}^{\kern1sp}\m@th$}%
      \ifdim\wd4>\wd2 %
        \let\mint@limits=\nolimits
      \else
        \let\mint@limits=\limits
      \fi
    \fi
    \ifx\mint@limits\displaylimits
      \ifx#5\displaystyle
        \let\mint@limits=\limits
      \fi
    \fi
    \ifx\mint@limits\limits
      \sbox0{$#7#3\m@th$}%
      \sbox2{$#7#4\m@th$}%
      \ifdim\wd0>\dimen2 %
        \dimen2=\wd0 %
      \fi
      \ifdim\wd2>\dimen2 %
        \dimen2=\wd2 %
      \fi
    \fi
    \rlap{%
      $#5%
        \vcenter{%
          \hbox to\dimen2{%
            \hss
            $#6{#2}\m@th$%
            \hss
          }%
        }%
      $%
    }%
  \endgroup
}
\numberwithin{equation}{section}
\theoremstyle{plain}
\newtheorem{thm}{Theorem}[section]
\newtheorem{prop}[thm]{Proposition}
\newtheorem{cor}[thm]{Corollary}
\theoremstyle{definition}
\newtheorem{defn}[thm]{Definition}
\newtheorem{rem}[thm]{Remark}
\newcommand{\G}{\mathbb{G}}
\title[Anisotropic Shannon inequality]{Anisotropic Shannon inequality}
\author[M. Chatzakou]{Marianna Chatzakou}
\address{
	Marianna Chatzakou:
	\endgraf
    Department of Mathematics: Analysis, Logic and Discrete Mathematics
    \endgraf
    Ghent University, Belgium
  	\endgraf
	{\it E-mail address} {\rm marianna.chatzakou@ugent.be}
		}
\author[A. Kassymov]{Aidyn Kassymov}
\address{
  Aidyn Kassymov:
  \endgraf
   \endgraf
  Department of Mathematics: Analysis, Logic and Discrete Mathematics
  \endgraf
  Ghent University, Belgium
  \endgraf
  and
  \endgraf
  Institute of Mathematics and Mathematical Modeling
  \endgraf
  125 Pushkin str.
  \endgraf
  050010 Almaty
  \endgraf
  Kazakhstan
  \endgraf
  and
  \endgraf
  Al-Farabi Kazakh National University
  \endgraf
   71 Al-Farabi avenue
   \endgraf
   050040 Almaty
   \endgraf
   Kazakhstan
  \endgraf
	{\it E-mail address} {\rm aidyn.kassymov@ugent.be} and {\rm kassymov@math.kz}}
\author[M. Ruzhansky]{Michael Ruzhansky}
\address{
  Michael Ruzhansky:
  \endgraf
  Department of Mathematics: Analysis, Logic and Discrete Mathematics
  \endgraf
  Ghent University, Belgium
  \endgraf
 and
  \endgraf
  School of Mathematical Sciences
  \endgraf
  Queen Mary University of London
  \endgraf
  United Kingdom
  \endgraf
  {\it E-mail address} {\rm michael.ruzhansky@ugent.be}
  }
\begin{document}

\thanks{The authors are supported by the FWO Odysseus 1 grant G.0H94.18N: Analysis and Partial Differential Equations and by the Methusalem programme of the Ghent University Special Research Fund (BOF) (Grant number 01M01021). Marianna Chatzakou is a postdoctoral fellow of the Research Foundation – Flanders (FWO) under
the postdoctoral grant No 12B1223N. Michael Ruzhansky and Aidyn Kassymov are also supported by EPSRC grant EP/R003025/2 and the MESRK grant AP19676031, respectively.\\
\indent
{\it Keywords:} log-Sobolev inequality; log-Gagliardo-Nirenberg inequality; log-Caffarelli-Kohn-Nirenberg inequality; Shannon inequality; Nash inequality; homogeneous groups; stratified groups; graded groups; Lie groups}

\begin{abstract} 
In this note we prove the anisotropic version of the Shannon inequality. This can be conveniently realised in the setting of Folland and Stein's homogeneous groups. We give two proofs: one giving the best constant, and another one using the Kubo-Ogawa-Suguro inequality. 
\end{abstract}

\maketitle

\tableofcontents

\section{Introduction}

In this paper we derive the logarithmic versions of several well-known functional inequalities. Some inequalities are obtained with best constants, or with semi-explicit constants, the information that is useful for some further applications. Our techniques allow us to derive these inequalities in rather general settings, so we will be working in the settings of general Lie groups, as well as on several classes of nilpotent Lie groups, namely, graded and homogeneous Lie groups. Since on stratified Lie groups we also have the horizontal gradient at our disposal, we will also formulate versions of some of the inequalities in the setting of stratified groups, using the horizontal gradient instead of a power of a sub-Laplacian. 

In the Euclidean space, in one of the Sobolev's pioneering works, Sobolev obtained the following inequality, which at this moment is bearing his name:
\begin{equation}
    \|u\|_{L^{p^{*}}(\mathbb{R}^{n})}\leq C \|\nabla u\|_{L^{p}(\mathbb{R}^{n})},
\end{equation}
where $1<p<n$, $p^{*}=\frac{np}{n-p}$ and $C=C(n,p)>0$ is a positive constant. The best constant of this inequality was obtained by Talenti in \cite{T76}. The Sobolev inequality is one of the most important tools in studying PDE and variational problems.
Folland and Stein extended Sobolev's inequality to general stratified  groups (see e.g. \cite{GV}):
if $\mathbb{G}$ is a stratified group and $\Omega\subset \mathbb{G}$ is an open set, then there exists a constant $C>0$ such that  we have
\begin{equation}\label{Sobolev-Folland-Stein}
\|u\|_{L^{p^{*}}(\Omega)}\leq C\left(\int_{\Omega}|\nabla_{H} u|^{p}dx\right)^{\frac{1}{p}},\; 1<p<Q,\; p^{*}=\frac{Qp}{Q-p},
\end{equation}
for all $u\in C_{0}^{\infty}(\Omega)$. Here $\nabla_{H}$ is the horizontal gradient and $Q$ is the homogeneous dimension of $\mathbb{G}$.
Inequality \eqref{Sobolev-Folland-Stein} is called the Sobolev or Sobolev-Folland-Stein inequality. 
Furthermore, in relation to groups, we can mention
Sobolev inequalities and embeddings on general unimodular Lie groups \cite{VSCC93}, on general locally compact unimodular groups \cite{AR20}, on general noncompact Lie groups \cite{BPTV19,BPV21}, as well as Hardy-Sobolev inequalities on general Lie groups \cite{RY19}.
The Sobolev inequality on graded groups using Rockland operators was proved in \cite{FR17} and the best constant for it was obtained in \cite{RTY20}.

On the other hand, the logarithmic Sobolev inequality was shown to hold on $\mathbb{R}^n$ in the following form: 
\begin{equation}\label{ddlogin}
\int_{\mathbb{R}^{n}}\frac{|u|^{p}}{\|u\|^{p}_{L^{p}(\mathbb{R}^{n})}}\log\left(\frac{|u|^{p}}{\|u\|^{p}_{L^{p}(\mathbb{R}^{n})}}\right)dx\leq\frac{n}{p}\log\left(C\frac{\|\nabla u\|^{p}_{L^{p}(\mathbb{R}^{n})}}{\|u\|^{p}_{L^{p}(\mathbb{R}^{n})}}\right).
\end{equation}
We can refer to \cite{Wei78} for the case $p=2$, but to e.g. \cite{DD03} for some history review of cases 
$1\leq  p<\infty$, including the discussion of best constants.

In \cite{Mer08}, the author obtained a logarithmic Gagliardo-Nirenberg inequality. In \cite {FNQ18} and \cite{KRS20}  the authors proved the logarithmic Sobolev inequality and the fractional logarithmic Sobolev inequality on the Heisenberg group and on homogeneous  groups, respectively. A fractional weighted version of \eqref{ddlogin} on homogeneous groups was proved in \cite{KS20}. In this paper, we prove logarithmic Sobolev inequalities on graded groups and weighted logarithmic Sobolev inequalities on general Lie groups. As applications of these inequalities we show Nash and weighted Nash inequalities on graded and general Lie groups, respectively. The log-Sobolev type inequalities with weights are also sometimes called the log-Hardy inequalities \cite{DDFT10}.

In this paper we establish Shannon's inequality on general homogeneous groups, and we can refer to its links to 
Shannon's entropy \cite{AO73, Isi72, Kap87} and information theory \cite{Sha48, Khi57, MPP00}.

 After Shannon's seminal paper \cite{Sha48} in 1948, several versions of Shannon's inequality have appeared either in discrete, cf. \cite{Khi57,AB00,Wei78,AO73}, or in integral form, cf. \cite{Isi72,Kap87,KOS19}, on certain metric spaces. The underlying motivation is the study of inequalities concerning the entropy function, and, as such, can be regarded as the mathematical foundation of information theory; we refer to \cite{MPP00,MF93} for an overview of the topic. Characterisations of the entropy appear, in the integral form, as the gain of information with functional inequalities. The latter, in the case of a homogeneous Lie group $\G$, with homogeneous dimension $Q$, where $|\cdot|$ an arbitrary homogeneous quasi-norm, and $\alpha\in(1,\infty)$, reads as follows:
For all $u\not=0$ we have 
\begin{equation}\label{EQ:Shannon}
      \int_{\G}\frac{|u(x)|}{\|u\|_{L^{1}(\G)}}\log\left(\frac{|u(x)|}{\|u\|_{L^{1}(\G)}}\right)^{-1}dx\leq \frac{Q}{\alpha}\log\left(\frac{\alpha e A_{Q,\alpha}}{Q}\frac{\||\cdot|^{\alpha}u\|_{L^{1}(\G)}}{\|u\|_{L^{1}(\G)}}\right),
\end{equation}
with an explicit value for $ A_{Q,\alpha}$ (see \eqref{EQ:Aexp1}) that is best possible. Shannon's inequality gives sufficient conditions under which the generalised entropy function, particularly in our case the left-hand side of \eqref{EQ:Shannon}, converges.
Shannon's inequality can be viewed, in some sense, as the counter part of the log-Sobolev inequality as it arises as the limiting case of \eqref{ddlogin} for $p=1$, where, however, instead of the regularity of $u$ it is assumed that $|\cdot|^{\alpha}u$ is in $L^1(\G).$

\section{Preliminaries}
In  this  section,  we  briefly  recall  definitions and  main  properties  of  the homogeneous groups.
The comprehensive analysis on such groups has been initiated in the works of Folland and Stein \cite{FS}, but in our exposition below we follow a more recent presentation in the open access book \cite{FR16}. 

\begin{defn}[\cite{FS, FR16}, Homogeneous group]
A Lie group (on $\mathbb{R}^{N}$) $\mathbb{G}$ with the dilation
$$D_{\lambda}(x):=(\lambda^{\nu_{1}}x_{1},\ldots,\lambda^{\nu_{N}}x_{N}),\; \nu_{1},\ldots, \nu_{n}>0,\; D_{\lambda}:\mathbb{R}^{N}\rightarrow\mathbb{R}^{N},$$
which is an automorphism of the group $\mathbb{G}$ for each $\lambda>0,$
is called a {\em homogeneous (Lie) group}.
\end{defn}
For simplicity,  in this paper we use the notation $\lambda x$ for the dilation $D_{\lambda}(x)$.
We
denote 
\begin{equation}
Q:=\nu_{1}+\ldots+\nu_{N},
\end{equation}
the homogeneous dimension of a homogeneous group $\mathbb{G}$. 
Let $dx$ denote the Haar measure on $\mathbb{G}$ and let $|S|$ denote the corresponding volume of a measurable set $S\subset \mathbb{G}$.
Then we have
\begin{equation}\label{scal}
|D_{\lambda}(S)|=\lambda^{Q}|S| \quad {\rm and}\quad \int_{\mathbb{G}}f(\lambda x)
dx=\lambda^{-Q}\int_{\mathbb{G}}f(x)dx.
\end{equation}
We also note that from \cite[Proposition 1.6.6]{FR16}, the standard Lebesgue measure $dx$ on $\mathbb{R}^{N}$ is the Haar measure on $\G$. 
Then we have the following widely used property in this paper,  see e.g. \cite[p. 19]{RS19}:
Let $\mathbb{G}$ be a homogeneous Lie group with homogeneous dimension $Q$, $r>0$, and let $dx$ be a Haar measure. Then, we have
$$d(rx)=r^{Q}dx.$$
\begin{defn}[{\cite[Definition 3.1.33]{FR16} or \cite[Definition 1.2.1]{RS19}}]\label{quasi-norm}
For any homogeneous group $\mathbb{G}$ there exist  homogeneous quasi-norms, which are  continuous non-negative functions
\begin{equation}
\mathbb{G}\ni x\mapsto |x|\in[0,\infty),
\end{equation}
with the properties

\begin{itemize}
\item[a)] $|x|=|x^{-1}|$ for all $x\in\mathbb{G}$,
\item[b)] $|\lambda x|=\lambda|x|$ for all $x\in \mathbb{G}$ and $\lambda>0$,
\item[c)] $|x|=0$ if and only if $x=0$.
\end{itemize}
\end{defn}
Moreover, the following polarisation formula on homogeneous Lie groups will be used in our proofs, as established by Folland and Stein \cite{FS}.
\begin{prop}[e.g. {\cite[Proposition 3.1.42]{FR16}}]
Let $\mathbb{G}$ be a homogeneous Lie group and $\mathfrak{S}:=\{x\in \mathbb{G}:\,|x|=1\},$ be the unit sphere with respect to the homogeneous quasi-norm $|\cdot|.$ Then there is a unique Radon measure $\sigma$ on $\mathfrak{S}$ such that for all $f\in L^{1}(\mathbb{G}),$ we have
\begin{equation}\label{EQ:polar}
\int_{\mathbb{G}}f(x)dx=\int_{0}^{\infty}
\int_{\mathfrak{S}}f(ry)r^{Q-1}d\sigma(y)dr.
\end{equation}
\end{prop}

\section{Shannon inequality on homogeneous groups}

In this section we show the Shannon inequality on homogeneous Lie groups.  Let us introduce the weighted Lebesgue space $$L^{p,\alpha}(\G):=\{u:u\in L^{p}_{loc}(\G),\,\,\langle x\rangle^{\alpha} u\in L^{p}(\G)\},$$ where $\alpha>0$ and 
$$\langle x \rangle:=(1+|x|^{2})^{\frac{1}{2}},\,\,\,\,\,\text{for}\,\,\,x\in \G,$$
with $|\cdot|$ a homogeneous quasi-norm on $\G.$
Firstly, let us show Shannon inequality.
\begin{thm}[Shannon inequality]\label{shthm1}
Let $\G$ be a homogeneous Lie group with homogeneous dimension $Q$ and let $|\cdot|$ be a homogeneous quasi-norm on $\G$. Suppose that $\alpha\in(0,\infty)$ and $u\in L^{1,\alpha}(\G)\setminus \{0\}$. Then we have 
\begin{equation}\label{sh11}
    \int_{\G}\frac{|u(x)|}{\|u\|_{L^{1}(\G)}}\log\left(\frac{|u(x)|}{\|u\|_{L^{1}(\G)}}\right)^{-1}dx\leq \frac{Q}{\alpha}\log\left(\frac{\alpha e A_{Q,\alpha}}{Q}\frac{\||\cdot|^{\alpha}u\|_{L^{1}(\G)}}{\|u\|_{L^{1}(\G)}}\right),
\end{equation}
where 
\begin{equation}\label{EQ:Aexp1}
    A^{\frac{Q}{\alpha}}_{Q,\alpha}=\frac{|\mathfrak{S}|\Gamma\left({\frac{Q}{\alpha}}\right)}{\alpha},
\end{equation}
with $|\mathfrak{S}|$ the  $Q-1$ dimensional surface measure of the unit quasi-sphere with respect to $|\cdot|$. Moreover, $A_{Q,\alpha}$ is the best possible constant.
This constant is attained with $E_{\alpha}(x)=\exp(-A_{Q,\alpha}|x|^{\alpha})$.
\end{thm}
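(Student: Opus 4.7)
The plan is to reduce \eqref{sh11} to a moment--entropy comparison against a one--parameter family of anisotropic Gaussians adapted to the dilations of $\G$. Setting $f(x):=|u(x)|/\|u\|_{L^{1}(\G)}$ turns the left-hand side of \eqref{sh11} into the differential entropy $-\int_{\G} f\log f\,dx$, and the ratio on the right is exactly the $\alpha$-moment $M_{\alpha}(f):=\int_{\G}|x|^{\alpha}f(x)\,dx$. First I would introduce, for $\beta>0$, the probability densities $g_{\beta}(x):=C_{\beta}\exp(-\beta|x|^{\alpha})$ and compute $C_{\beta}$ via the polar decomposition \eqref{EQ:polar}; the substitution $s=\beta r^{\alpha}$ gives
\[
\int_{\G}e^{-\beta|x|^{\alpha}}\,dx=|\mathfrak{S}|\int_{0}^{\infty}r^{Q-1}e^{-\beta r^{\alpha}}\,dr=\frac{|\mathfrak{S}|\,\Gamma(Q/\alpha)}{\alpha\,\beta^{Q/\alpha}}=\frac{A_{Q,\alpha}^{Q/\alpha}}{\beta^{Q/\alpha}},
\]
so $C_{\beta}=\beta^{Q/\alpha}/A_{Q,\alpha}^{Q/\alpha}$ normalises $g_{\beta}$.

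The central tool is the nonnegativity of the Kullback--Leibler divergence $\int_{\G}f\log(f/g_{\beta})\,dx\geq 0$, a one-line consequence of Jensen's inequality applied to the convex function $-\log$ against the probability measure $f\,dx$. Expanding $\log g_{\beta}=\log C_{\beta}-\beta|x|^{\alpha}$ and rearranging produces the linear-in-$\beta$ estimate
\[
-\int_{\G}f\log f\,dx\leq -\log C_{\beta}+\beta M_{\alpha}(f)=\frac{Q}{\alpha}\log A_{Q,\alpha}-\frac{Q}{\alpha}\log\beta+\beta M_{\alpha}(f).
\]
Minimising the right-hand side over $\beta>0$ gives $\beta^{*}=Q/(\alpha M_{\alpha}(f))$, hence $\beta^{*}M_{\alpha}(f)=Q/\alpha$, and the bound telescopes to
\[
-\int_{\G}f\log f\,dx\leq \frac{Q}{\alpha}\log\!\left(\frac{\alpha\,e\,A_{Q,\alpha}}{Q}\,M_{\alpha}(f)\right),
\]
which is precisely \eqref{sh11} after restoring the $u$ notation.

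For sharpness, equality in the Kullback--Leibler step forces $f=g_{\beta^{*}}$ almost everywhere, and equality in the one--variable optimisation fixes $\beta^{*}=A_{Q,\alpha}$. The function $E_{\alpha}(x)=\exp(-A_{Q,\alpha}|x|^{\alpha})$ realises both: the same polar computation with $\beta=A_{Q,\alpha}$ yields $\|E_{\alpha}\|_{L^{1}(\G)}=1$ and $\||\cdot|^{\alpha}E_{\alpha}\|_{L^{1}(\G)}=Q/(\alpha A_{Q,\alpha})$, so $E_{\alpha}=g_{\beta^{*}}$ and direct substitution into \eqref{sh11} produces equality; hence $A_{Q,\alpha}$ is the best possible constant.

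I do not anticipate a serious obstacle. The full anisotropy of $\G$ enters only through the polar formula \eqref{EQ:polar}, which repackages the dilation structure into the single scalar $|\mathfrak{S}|$; after that, the argument is a Jensen inequality followed by a one-variable optimisation. The only care points are (i) keeping straight the signs and the additive $Q/\alpha$ from the optimal $\beta$ so that the factor $e$ reappears inside the logarithm as in \eqref{sh11}, and (ii) verifying finiteness of $\int_{\G}f\log g_{\beta}\,dx$, which is immediate from the hypothesis $u\in L^{1,\alpha}(\G)$ together with the convention $0\log 0=0$ on $\{u=0\}$.
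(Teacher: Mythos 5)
Your proposal is correct and follows essentially the same route as the paper: both rest on Jensen's inequality (nonnegativity of the relative entropy against a reference density proportional to $e^{-\beta|x|^{\alpha}}$, normalised via the polar decomposition \eqref{EQ:polar}) followed by a one-parameter optimisation that produces the factor $e$ inside the logarithm. The only cosmetic difference is that you optimise over the Gaussian parameter $\beta$ directly, whereas the paper fixes $\beta=A_{Q,\alpha}$ and instead dilates $u$ to $u_{\lambda}(x)=\lambda^{Q}u(\lambda x)$ before optimising in $\lambda$ --- the two parametrisations are equivalent, and your verification of equality for $E_{\alpha}$ matches the paper's sharpness argument.
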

\begin{proof}
Without loss of generality, it is enough to prove inequality \eqref{sh11} for $\|u\|_{L^{1}(\G)}=1,$ it means, it is enough to prove
\begin{equation}\label{shwith1}
    \int_{\G}|u(x)|\log\left(|u(x)|\right)^{-1}dx\leq \frac{Q}{\alpha}\log\left(\frac{\alpha e A_{Q,\alpha}}{Q}\||\cdot|^{\alpha}u\|_{L^{1}(\G)}\right).
\end{equation}
Let us denote $d\mu=|u(x)|dx$, then we have $\int_{\G}d\mu=1$ is a probability measure. 
First, let us compute the following integral using the change $r^{\alpha}=z$:
\begin{equation}
    \begin{split}
        A^{\frac{Q}{\alpha}}_{Q,\alpha}&=\int_{\G}e^{-|x|^{\alpha}}dx\\&
        \stackrel{\eqref{EQ:polar}}=\int_{0}^{\infty}\int_{\mathfrak{S}}e^{-r^{\alpha}}r^{Q-1}d\sigma(y)dr\\&
        =|\mathfrak{S}|\int_{0}^{\infty}e^{-r^{\alpha}}r^{Q-1}dr\\&
        =\frac{|\mathfrak{S}|}{\alpha}\int_{0}^{\infty}e^{-z}z^{\frac{Q}{\alpha}-1}dz\\&
        =\frac{|\mathfrak{S}|\Gamma\left(\frac{Q}{\alpha}\right)}{\alpha}.
    \end{split}
\end{equation}

By using Jensen's inequality with polarisation and changing variables $A_{Q,\alpha}r^{\alpha}=z$, with $E_{\alpha}(x)=\exp(-A_{Q,\alpha}|x|^{\alpha})$, we compute
\begin{equation}\label{sh321}
    \begin{split}
    \exp\left(\int_{\G}|u(x)|\log\left(\frac{|u(x)|}{E_{\alpha}(x)}\right)^{-1}dx\right)&=\exp\left(\int_{\G}\log\left(\frac{|u(x)|}{E_{\alpha}(x)}\right)^{-1}d\mu\right)\\&
    \leq \int_{\G}\left(\frac{|u(x)|}{E_{\alpha}(x)}\right)^{-1}d\mu\\&
    =\int_{\G}e^{-A_{Q,\alpha}|x|^{\alpha}}dx\\&
    =|\mathfrak{S}|\int_{0}^{\infty}e^{-A_{Q,\alpha}r^{\alpha}}r^{Q-1}dr\\&
    =\frac{|\mathfrak{S}| A^{-\frac{Q}{\alpha}}}{\alpha}\int_{0}^{\infty}e^{-z}z^{\frac{Q}{\alpha}-1}dz\\&
    =\frac{|\mathfrak{S}|\Gamma\left(\frac{Q}{\alpha}\right)A^{-\frac{Q}{\alpha}}}{\alpha}\\&
    =1,
    \end{split}
\end{equation}
then, we obtain
\begin{equation}\label{shin1}
\begin{split}
    \int_{\G}|u(x)|\log\left(|u(x)|\right)^{-1}dx\leq \int_{\G}|u(x)|\log\left(E_{\alpha}(x)\right)^{-1}dx=A_{Q,\alpha}\int_{\G}|x|^{\alpha}|u(x)|dx.
\end{split}
\end{equation}
For $\lambda>0$, let us denote by $u_{\lambda}\in L^{1}(\G)$  the function $u_{\lambda}(x)=\lambda^{Q}u(\lambda x)$. Putting $u_{\lambda}$ in \eqref{shin1} instead of $u$, we have 
\begin{equation}\label{shin12345}
\begin{split}
    \int_{\G}|u_{\lambda}(x)|\log\left(|u_{\lambda}(x)|\right)^{-1}dx\leq A_{Q,\alpha}\int_{\G}|x|^{\alpha}|u_{\lambda}(x)|dx,
\end{split}
\end{equation}
and multiplying both sides by $\frac{\alpha}{Q}$, we have
\begin{equation}\label{shin1234}
\begin{split}
    \frac{\alpha}{Q}\int_{\G}|u_{\lambda}(x)|\log\left(|u_{\lambda}(x)|\right)^{-1}dx\leq \frac{\alpha A_{Q,\alpha}}{Q}\int_{\G}|x|^{\alpha}|u_{\lambda}(x)|dx.
\end{split}
\end{equation}
Then let us compute left hand side of \eqref{shin1234}, 
\begin{equation}\label{shinpr1}
    \begin{split}
        \frac{\alpha}{Q}\int_{\G}|u_{\lambda}(x)|\log(|u_{\lambda}(x)|)^{-1}dx&=\frac{\alpha}{Q}\int_{\G}\lambda^{Q}|u(\lambda x)|\log(\lambda^{Q}|u(\lambda x)|)^{-1}dx\\&
    \stackrel{\eqref{scal}}=\frac{\alpha}{Q}\int_{\G}|u(x)|\log(\lambda^{Q}|u(x)|)^{-1}dx\\&
    =\frac{\alpha}{Q}\int_{\G}|u(x)|\log(|u(x)|)^{-1}dx-\log \lambda^{\alpha},
    \end{split}
\end{equation}
and the right hand side of \eqref{shin1234},
\begin{equation}\label{shinpr2}
    \begin{split}
       \frac{\alpha A_{Q,\alpha}}{Q} \int_{\G}|x|^{\alpha}|u_{\lambda}(x)|dx&
       =\frac{\alpha A_{Q,\alpha}}{Q} \int_{\G}|x|^{\alpha}\lambda^{Q}|u(\lambda x)|dx\\&
       =\frac{\alpha A_{Q,\alpha}}{Q} \int_{\G}\frac{\lambda^{\alpha}}{\lambda^{\alpha}}|x|^{\alpha}\lambda^{Q}|u(\lambda x)|dx\\&
       =\frac{\alpha A_{Q,\alpha}}{Q} \int_{\G}\frac{1}{\lambda^{\alpha}}|\lambda x|^{\alpha}\lambda^{Q}|u(\lambda x)|dx\\&
       \stackrel{\eqref{scal}}=\frac{\alpha A_{Q,\alpha}\lambda^{-\alpha}}{Q}\int_{\G}|x|^{\alpha}|u(x)|dx.
    \end{split}
\end{equation}
Putting the last two facts in \eqref{shin1234}, we get
\begin{equation}
    \frac{\alpha}{Q}\int_{\G}u(x)\log(|u(x)|)^{-1}dx\leq \log\lambda^{\alpha}+\frac{\alpha}{Q}\lambda^{-\alpha}A_{Q,\alpha}\int_{\G}|x|^{\alpha}|u(x)|dx.
    \end{equation}
    Then by taking $\lambda^{\alpha}=\frac{\alpha A_{Q,\alpha}}{Q}\int_{\G}|x|^{\alpha}|u(x)|dx$ in the last fact, we have 
    \begin{equation}
\frac{\alpha}{Q}\int_{\G}|u(x)|\log(|u(x)|)^{-1}dx\leq\log\left(\frac{e\alpha A_{Q,\alpha}}{Q}\int_{\G}|x|^{\alpha}|u(x)|dx\right).
    \end{equation}

Let us prove the best possible constant in \eqref{sh11}.  It is enough to show that the function $E_{\alpha}(x)$ gives equality in \eqref{shin1}, which means that we have
\begin{equation}\label{shpr3}
    \begin{split}
        \frac{\alpha}{Q}\int_{\G}E_{\alpha}(x)\log(E_{\alpha}(x))^{-1}dx&=\frac{\alpha}{Q}\int_{\G}E_{\alpha}(x)\log(\exp(A_{Q,\alpha})|x|^{\alpha})dx\\&
        =\frac{\alpha A_{Q,\alpha}}{Q}\int_{\G}|x|^{\alpha}E_{\alpha}(x)dx. 
    \end{split}
\end{equation}
By taking $E_{\alpha,\lambda}(x)=\lambda^{Q}e^{-A_{Q,\alpha}|\lambda x|^{b}}$ with $\lambda^{b}=\frac{\alpha A_{Q,\alpha}}{Q}\int_{\G}|x|^{\alpha}E_{\alpha}(x)dx$ in \eqref{shpr3}, and repeating same calculation as \eqref{shinpr1} and \eqref{shinpr2}, we get equality in \eqref{shwith1}.
\end{proof}
Let us now show another proof of the Shannon inequality. 
Firstly, we show the Kubo-Ogawa-Suguro inequality and as an application, we derive the Shannon inequality. 

\begin{thm}[Kubo-Ogawa-Suguro inequality] 
Let $\G$ be a homogeneous Lie group with homogeneous dimension $Q$ and a homogeneous quasi-norm $|\cdot|$ on $\G$. Let $\alpha\in(1,\infty)$ and $u\in L^{1,\alpha}(\G)\setminus \{0\}$. Then we have 
\begin{equation}\label{KOSin}
  -  \int_{\G}|u(x)|\log\frac{|u(x)|}{\|u\|_{L^{1}(\G)}}dx\leq Q\int_{\G}|u(x)|\log \left(C_{Q,\alpha}(1+|x|^{\alpha})\right)dx,
\end{equation}
where 
\begin{equation}
    C_{Q,\alpha}=\left(\frac{|\mathfrak{S}|\Gamma\left(\frac{Q}{\alpha}\right)\Gamma\left(\frac{Q}{\alpha'}\right)}{\alpha\Gamma\left(Q\right)}\right)^{\frac{1}{Q}},
\end{equation}
is the best constant with $\frac{1}{\alpha}+\frac{1}{\alpha'}=1$ and $|\mathfrak{S}|$ is the  $Q-1$ dimensional surface measure of the unit quasi-sphere with respect to $|\cdot|$.
\end{thm}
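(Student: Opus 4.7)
The plan is to mimic the strategy of the isotropic Kubo--Ogawa--Suguro argument, namely compare $|u|$ with an explicit probability density whose logarithm matches the right-hand side, and then invoke Jensen's inequality. The natural candidate is
\begin{equation*}
f(x) = C_{Q,\alpha}^{-Q}\bigl(1+|x|^{\alpha}\bigr)^{-Q},
\end{equation*}
which, once normalised, will precisely produce the constant $C_{Q,\alpha}$ stated in the theorem.

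First I would reduce to the case $\|u\|_{L^{1}(\G)}=1$ by homogeneity, so that $d\mu:=|u(x)|\,dx$ becomes a probability measure on $\G$. Next I would verify that $f$ is indeed a probability density by the polar decomposition \eqref{EQ:polar}: substituting $z=r^{\alpha}$ yields
\begin{equation*}
\int_{\G}\bigl(1+|x|^{\alpha}\bigr)^{-Q}dx
= \frac{|\mathfrak{S}|}{\alpha}\int_{0}^{\infty}\frac{z^{\frac{Q}{\alpha}-1}}{(1+z)^{Q}}\,dz
= \frac{|\mathfrak{S}|\,\Gamma\!\left(\tfrac{Q}{\alpha}\right)\Gamma\!\left(\tfrac{Q}{\alpha'}\right)}{\alpha\,\Gamma(Q)} = C_{Q,\alpha}^{Q},
\end{equation*}
where the middle equality is the standard Beta integral $B(p,q)=\Gamma(p)\Gamma(q)/\Gamma(p+q)$ with $p=Q/\alpha$, $p+q=Q$, so that $q=Q/\alpha'$ by the conjugate relation $1/\alpha+1/\alpha'=1$.

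Then I would apply Jensen's inequality to the concave function $\log$ against the probability measure $d\mu$:
\begin{equation*}
\int_{\G}\log\!\left(\frac{f(x)}{|u(x)|}\right)|u(x)|\,dx
\;\leq\; \log\!\left(\int_{\G}\frac{f(x)}{|u(x)|}\,|u(x)|\,dx\right)
\;=\;\log\!\left(\int_{\G}f(x)\,dx\right)\;=\;0.
\end{equation*}
Rearranging gives
\begin{equation*}
-\int_{\G}|u(x)|\log|u(x)|\,dx \;\leq\; -\int_{\G}|u(x)|\log f(x)\,dx
\;=\; Q\int_{\G}|u(x)|\log\!\bigl(C_{Q,\alpha}(1+|x|^{\alpha})\bigr)\,dx,
\end{equation*}
which is precisely \eqref{KOSin} under the normalisation $\|u\|_{L^{1}}=1$. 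Removing the normalisation by replacing $u$ with $u/\|u\|_{L^{1}(\G)}$ reproduces \eqref{KOSin} in full generality.

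The sharpness of $C_{Q,\alpha}$ is essentially free: equality in Jensen's inequality holds iff $f(x)/|u(x)|$ is $\mu$-a.e.\ constant, i.e.\ iff $|u|$ is a scalar multiple of $f$, and the normalisation $\|u\|_{L^{1}(\G)}=1=\|f\|_{L^{1}(\G)}$ then forces $|u|=f$. Hence the extremiser is the anisotropic Cauchy-type density $f$ itself, and the constant cannot be improved. I do not expect any real obstacle in the argument; the only delicate point is the Beta-integral identification that produces the explicit constant, which is purely computational once polar coordinates have been introduced.
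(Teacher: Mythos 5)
Your proposal is correct and follows essentially the same route as the paper: the comparison density $f=c_{Q,\alpha}(1+|x|^{\alpha})^{-Q}$ (identical to the paper's $\varphi$, since $C_{Q,\alpha}^{-Q}=c_{Q,\alpha}$), the polar-coordinate Beta-integral normalisation, Jensen's inequality applied to $\log(f/|u|)$ against $d\mu=|u|\,dx$, and the equality case of Jensen for sharpness. The only step the paper includes that you omit is the final verification, via a second application of Jensen and the hypothesis $\langle\cdot\rangle^{\alpha}u\in L^{1}(\G)$, that the right-hand side of the inequality is actually finite.
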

\begin{proof}
Without loss of generality, assume that $\|u\|_{L^{1}(\G)}=1$. Then, by denoting $d\mu=|u(x)|dx$, we have $\int_{\G}d\mu=1$. 
Let us denote by $\varphi(x)=c_{Q,\alpha}(1+|x|^{\alpha})^{-Q}$, where $c_{Q,\alpha}=\frac{\alpha\Gamma\left(Q\right)}{|\mathfrak{S}|\Gamma\left(\frac{Q}{\alpha}\right)\Gamma\left(\frac{Q}{\alpha'}\right)}$ and let us prove that $\|\varphi\|_{L^{1}(\G)}=1$. By using the polar decomposition with the change of variables $(1+r^{\alpha})^{-Q}=t^{Q}$, we compute
\begin{equation}
    \begin{split}
        \int_{\G}(1+|x|^{\alpha})^{-Q}dx&=\int_{0}^{\infty}\int_{\mathfrak{S}}(1+r^{\alpha})^{-Q}r^{Q-1}drd\sigma(y)\\&
        =|\mathfrak{S}|\int_{0}^{\infty}(1+r^{\alpha})^{-Q}r^{Q-1}dr\\&
        =\frac{|\mathfrak{S}|}{\alpha}\int_{0}^{1}(1-t)^{\frac{Q}{\alpha}-1}t^{\frac{Q}{\alpha'}-1}dt\\&
        =\frac{|\mathfrak{S}|}{\alpha}\text{B}\left(\frac{Q}{\alpha},\frac{Q}{\alpha'}\right)\\&
        =\frac{|\mathfrak{S}|\Gamma\left(\frac{Q}{\alpha}\right)\Gamma\left(\frac{Q}{\alpha'}\right)}{\alpha\Gamma\left(Q\right)},
    \end{split}
\end{equation}
where $\text{B}(\cdot,\cdot)$ is the Beta function. Then $\|\varphi\|_{L^{1}(\G)}=1$.
By using this last fact with Jensen's inequality, we get
\begin{equation}
\begin{split}
    \int_{\G}|u(x)|\log\left(\frac{\varphi(x)}{|u(x)|}\right)dx&\leq \log\left(\int_{\G}\frac{\varphi(x)}{|u(x)|}d\mu\right)\\&
    =\log\left(\int_{\G}\varphi(x)dx\right)\\&
    =0.
\end{split}
\end{equation}
It means that  we have
\begin{equation}
\begin{split}
    -\int_{\G}|u(x)|\log|u(x)|dx&\leq -\int_{\G}|u(x)|\log\varphi(x)dx\\&
=-\int_{\G}|u(x)|\log c_{Q,\alpha}(1+|x|^{\alpha})^{-Q}dx\\&
=Q\int_{\G}|u(x)|\log c^{-\frac{1}{Q}}_{Q,\alpha}(1+|x|^{\alpha})dx\\&
=Q\int_{\G}|u(x)|\log C_{Q,\alpha}(1+|x|^{\alpha})dx.
\end{split}
\end{equation}
Also, in the last inequality, equality holds, if and only if 
\begin{equation}
    u(x)=c_{Q,\alpha}(1+|x|^{\alpha})^{-Q}.
\end{equation}
By using Jensen's inequality, we get
\begin{equation}\label{sh1}
    \begin{split}
        \int_{\G}|u(x)|\log(1+|x|^{\alpha})dx&\leq \log\left(\int_{\G}(1+|x|^{\alpha})d\mu\right)\\&
        =\log\left(\int_{\G}|u(x)|(1+|x|^{\alpha})dx\right)\\&
        \leq C\log\left(\int_{\G}\langle x\rangle^{\alpha}|u(x)|dx\right).
    \end{split}
\end{equation}
By using \eqref{sh1}, we have that 
\begin{equation}
\begin{split}
    -\int_{\G}|u(x)|\log|u(x)|dx&\leq Q\int_{\G}|u(x)|\log C_{Q,\alpha}(1+|x|^{\alpha})dx\\&
    =Q\int_{\G}|u(x)|\log (1+|x|^{\alpha})dx+Q\int_{\G}|u(x)|\log C_{Q,\alpha}dx\\&
    \stackrel{\eqref{sh1}}\leq C\log\left(\int_{\G}\langle x\rangle^{\alpha}|u(x)|dx\right)+Q\int_{\G}|u(x)|\log C_{Q,\alpha}dx\\&
    <\infty,
\end{split}
\end{equation}
also implying \eqref{KOSin}.
\end{proof}
Let us show that  Kubo-Ogawa-Suguro inequality also implies Shannon's inequality.
\begin{cor}[Shannon inequality]\label{shthm2}
Let $\G$ be a homogeneous Lie group with homogeneous dimension $Q$ and a homogeneous quasi-norm $|\cdot|$ on $\G$. Let $\alpha\in(1,\infty)$ and $u\in L^{1,\alpha}(\G)\setminus \{0\}$. Then we have 
\begin{equation}\label{shkos}
    \int_{\G}\frac{|u(x)|}{\|u\|_{L^{1}(\G)}}\log\left(\frac{|u(x)|}{\|u\|_{L^{1}(\G)}}\right)^{-1}dx\leq \frac{Q}{\alpha}\log\left(\frac{B_{Q,\alpha}}{\|u\|_{L^{1}(\G)}}\||\cdot|^{\alpha}u\|_{L^{1}(\G)}\right),
\end{equation}
where 
\begin{equation}\label{EQ:Aexp}
    B_{Q,\alpha}=\alpha^{\alpha}(\alpha-1)^{1-\alpha}\left(\frac{|\mathfrak{S}|\Gamma\left(\frac{Q}{\alpha}\right)\Gamma\left(\frac{Q}{\alpha'}\right)}{\alpha\Gamma\left(Q\right)}\right)^{\frac{\alpha}{Q}},
\end{equation}
with $\frac{1}{\alpha}+\frac{1}{\alpha'}=1$ and $|\mathfrak{S}|$ is the  $Q-1$ dimensional surface measure of the unit quasi-sphere with respect to $|\cdot|$.
\end{cor}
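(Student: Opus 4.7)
The plan is to deduce \eqref{shkos} from the Kubo--Ogawa--Suguro inequality \eqref{KOSin} by a scaling and optimisation argument. First, by homogeneity of both sides, I would normalise $\|u\|_{L^{1}(\G)}=1$, so that $d\mu=|u(x)|dx$ is a probability measure and \eqref{KOSin} reads
\[
-\int_{\G}|u|\log|u|\,dx \;\leq\; Q\log C_{Q,\alpha}+Q\int_{\G}|u(x)|\log\bigl(1+|x|^{\alpha}\bigr)\,dx.
\]
Since this bound is not scale invariant (the additive $1$ inside the logarithm breaks the homogeneity in $|x|$), I would apply it instead to the dilated density $u_{\lambda}(x)=\lambda^{Q}u(\lambda x)$ for $\lambda>0$. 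Exploiting \eqref{scal} and the entropy transformation already carried out in \eqref{shinpr1}, this produces the one-parameter family
\[
-\int_{\G}|u|\log|u|\,dx \;\leq\; Q\log(\lambda\, C_{Q,\alpha})+Q\int_{\G}|u(x)|\log\bigl(1+\lambda^{-\alpha}|x|^{\alpha}\bigr)\,dx,\qquad \lambda>0.
\]

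Next I would apply Jensen's inequality to the concave function $\log$ against the probability measure $d\mu$, bounding the right-most integral by $\log\bigl(1+\lambda^{-\alpha}M\bigr)$ with $M:=\||\cdot|^{\alpha}u\|_{L^{1}(\G)}$. Combining this with the previous display, I would reach the scaled estimate
\[
-\int_{\G}|u|\log|u|\,dx \;\leq\; Q\log\!\bigl(C_{Q,\alpha}(\lambda+M\lambda^{1-\alpha})\bigr),\qquad \lambda>0.
\]

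The final step is to minimise $f(\lambda):=\lambda+M\lambda^{1-\alpha}$ over $\lambda>0$. Here the hypothesis $\alpha>1$ enters decisively: it guarantees that $f$ admits a unique interior critical point $\lambda_{*}=((\alpha-1)M)^{1/\alpha}$, at which a direct computation gives $f(\lambda_{*})=\alpha(\alpha-1)^{(1-\alpha)/\alpha}M^{1/\alpha}$. Substituting this optimal value and pulling the resulting factor $1/\alpha$ outside the logarithm yields the constant $\alpha^{\alpha}(\alpha-1)^{1-\alpha}C_{Q,\alpha}^{\alpha}$, which on expanding $C_{Q,\alpha}^{\alpha}$ is exactly $B_{Q,\alpha}$ of \eqref{EQ:Aexp}. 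Restoring the normalisation then recovers \eqref{shkos}.

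The argument is largely mechanical. The only conceptual obstacle is recognising that \eqref{KOSin} must first be \emph{homogenised} via the dilation $u\mapsto u_{\lambda}$ before Jensen's inequality is applied, so that the non-homogeneous defect $1+|x|^{\alpha}$ is converted into a free parameter $\lambda$ that can be optimised away to produce a scale-invariant bound in terms of $\||\cdot|^{\alpha}u\|_{L^{1}(\G)}$; the bookkeeping of the exponents $1/\alpha$, $(1-\alpha)/\alpha$ and $\alpha/Q$ in assembling the final constant $B_{Q,\alpha}$ is the portion most prone to arithmetical slips.
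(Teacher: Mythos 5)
Your proposal is correct and follows essentially the same route as the paper: normalise $\|u\|_{L^{1}(\G)}=1$, feed the dilates $u_{\lambda}(x)=\lambda^{Q}u(\lambda x)$ into the Kubo--Ogawa--Suguro bound together with Jensen's inequality to obtain the one-parameter family $-\int_{\G}|u|\log|u|\,dx\leq Q\log\bigl(C_{Q,\alpha}(\lambda+\lambda^{1-\alpha}\||\cdot|^{\alpha}u\|_{L^{1}(\G)})\bigr)$, and then optimise at $\lambda=((\alpha-1)\||\cdot|^{\alpha}u\|_{L^{1}(\G)})^{1/\alpha}$. The only (immaterial) difference is that you dilate before applying Jensen's inequality whereas the paper applies Jensen first and then dilates; both yield the identical family of bounds and the same constant $B_{Q,\alpha}$.
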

\begin{proof}
Similarly to the previous theorem, without loss generality, we can assume that $\|u\|_{L^{1}(\G)}=1$ for  $u\in L^{1}(\G)$. Let us denote $d\mu=|u(x)|dx,$ then we have $\int_{\G}d\mu=1$ is a probability measure. 
By combining \eqref{KOSin} and Jensen's inequality, we get
\begin{equation}\label{sh3}
    \begin{split}
       -  \int_{\G}|u(x)|\log\frac{|u(x)|}{\|u\|_{L^{1}(\G)}}dx&\leq  Q\int_{\G}|u(x)|\log \left(C_{Q,\alpha}(1+|x|^{\alpha})\right)dx\\&
       = Q\int_{\G}\log \left(C_{Q,\alpha}(1+|x|^{\alpha})\right)d\mu\\&
       \leq Q\log\left(\int_{\G} C_{Q,\alpha}(1+|x|^{\alpha})d\mu\right)\\&
       =Q\log\left(\int_{\G} C_{Q,\alpha}|u(x)|(1+|x|^{\alpha})dx\right),
    \end{split}
\end{equation}
where $C_{Q,\alpha}=\left(\frac{|\mathfrak{S}|\Gamma\left(\frac{Q}{\alpha}\right)\Gamma\left(\frac{Q}{\alpha'}\right)}{\alpha\Gamma\left(Q\right)}\right)^{\frac{1}{Q}}.$

For $\lambda>0$, let us denote by $u_{\lambda}\in L^{1}(\G)$ the function $u_{\lambda}(x)=\lambda^{Q}u(\lambda x)$.
Then we have 
\begin{equation*}\label{sh4}
    \begin{split}
        -\int_{\G}|u_{\lambda}(x)|\log|u_{\lambda}(x)|dx&=-\int_{\G}\lambda^{Q}|u(\lambda x)|\log(\lambda^{Q}|u(\lambda x)|)dx\\&
        =-\int_{\G}\lambda^{Q}|u(\lambda x)|\log\lambda^{Q}dx-\int_{\G}\lambda^{Q}|u(\lambda x)|\log|u(\lambda x)|dx\\&
        =   -\log\lambda^{Q}\int_{\G}\lambda^{Q}|u(\lambda x)|dx-\int_{\G}\lambda^{Q}|u(\lambda x)|\log|u(\lambda x)|dx\\&
        =-\log\lambda^{Q}\int_{\G}|u(\lambda x)|d(\lambda x)-\int_{\G}|u(\lambda x)|\log|u(\lambda x)|d(\lambda x)\\&
        =-Q\log\lambda-\int_{\G}|u(x)|\log|u(x)|dx,
    \end{split}
\end{equation*}
and 
\begin{equation*}\label{sh5}
    \begin{split}
        Q\log \left(C_{Q,\alpha}\int_{\G}(1+|x|^{\alpha})|u_{\lambda}(x)|dx\right)&= Q\log \left(C_{Q,\alpha}\int_{\G}\lambda^{Q}(1+|x|^{\alpha})|u(\lambda x)|dx\right)\\&
        =Q\log C_{Q,\alpha}+Q\log \left(\int_{\G}\lambda^{Q}(1+|x|^{\alpha})|u(\lambda x)|dx\right)\\&
        =Q\log C_{Q,\alpha}+Q\log \left(\int_{\G}\lambda^{Q}(1+\frac{\lambda^{\alpha} }{\lambda^{\alpha}}|x|^{\alpha})|u(\lambda x)|dx\right)\\&
    =Q\log C_{Q,\alpha}+Q\log \left(\int_{\G}(1+\lambda^{-\alpha} |\lambda x|^{\alpha})|u(\lambda x)|d(\lambda x)\right)\\&
    =Q\log C_{Q,\alpha}+Q\log \left(\int_{\G}(1+\lambda^{-\alpha} |x|^{\alpha})|u(x)|dx\right)\\&
    =Q\log C_{Q,\alpha}+Q\log \left(1+\lambda^{-\alpha} \| |\cdot|^{\alpha}u\|_{L^{1}(\G)}\right).
    \end{split}
\end{equation*}
Using these two facts in \eqref{sh3}, we get
\begin{equation}
    -\int_{\G}|u(x)|\log|u(x)|dx\leq Q\log C_{Q,\alpha}+Q\log \left(\lambda+\lambda^{1-\alpha} \| |\cdot|^{\alpha}u\|_{L^{1}(\G)}\right).
\end{equation}
By choosing $\lambda=(\alpha-1)^{\frac{1}{\alpha}}\||\cdot|^{\alpha}u\|^{\frac{1}{\alpha}}_{L^{1}(\G)}$, we get 
\begin{equation*}
\begin{split}
     Q\log \left(\lambda+\lambda^{1-\alpha} \| |\cdot|^{\alpha}u\|_{L^{1}(\G)}\right)&=Q\log\left((\alpha-1)^{\frac{1}{\alpha}}\||\cdot|^{\alpha}u\|^{\frac{1}{\alpha}}_{L^{1}(\G)}+(\alpha-1)^{\frac{1-\alpha}{\alpha}}\||\cdot|^{\alpha}u\|^{\frac{1}{\alpha}}_{L^{1}(\G)}\right)\\&
     =Q\log\left(\alpha(\alpha-1)^{\frac{1}{\alpha}-1}\||\cdot|^{\alpha}u\|^{\frac{1}{\alpha}}_{L^{1}(\G)}\right)\\&
     =\frac{Q}{\alpha}\log\left(\alpha^{\alpha}(\alpha-1)^{1-\alpha}\||\cdot|^{\alpha}u\|_{L^{1}(\G)}\right).
\end{split}
\end{equation*}
Finally, we get
\begin{equation}
\begin{split}
    -\int_{\G}|u(x)|\log|u(x)|dx&\leq \frac{Q}{\alpha}\log \left(C^{\alpha}_{Q,\alpha}\alpha^{\alpha}(\alpha-1)^{1-\alpha}\||\cdot|^{\alpha}u\|_{L^{1}(\G)}\right)\\&
    =\frac{Q}{\alpha}\log \left(B_{Q,\alpha}\||\cdot|^{\alpha}u\|_{L^{1}(\G)}\right),
\end{split}
\end{equation}
implying \eqref{shkos}.
\end{proof}

\begin{rem}
 For large $Q\gg 1$, we have that the constant $B_{Q,\alpha}$ in \eqref{shkos} coincides with the best constant $\frac{\alpha A_{Q,\alpha}}{Q}$ in \eqref{sh11}, that is, $$B_{Q,\alpha}\simeq \frac{\alpha e A_{Q,\alpha}}{Q},\,\,\,\,Q\gg 1.$$
\end{rem}
\begin{proof}
From Stirling approximation formula
\begin{equation*}
    \Gamma(Q)\simeq(2\pi)^{\frac{1}{2}}e^{-Q}Q^{Q-\frac{1}{2}},\,\,\,\,\,Q\gg1,
\end{equation*}
we get,
\begin{equation}
    \begin{split}
       \frac{\alpha e A_{Q,\alpha}}{Q B_{Q,\alpha}}&\stackrel{\eqref{EQ:Aexp1},\eqref{EQ:Aexp}}=Q^{-1}\alpha^{1-\alpha}(\alpha-1)^{\alpha-1}e\left(\frac{\frac{|\mathfrak{S}|\Gamma\left({\frac{Q}{\alpha}}\right)}{\alpha}}{\frac{|\mathfrak{S}|\Gamma\left(\frac{Q}{\alpha}\right)\Gamma\left(\frac{Q}{\alpha'}\right)}{\alpha\Gamma\left(Q\right)}}\right)^{\frac{\alpha}{Q}}\\&
       =Q^{-1}(\alpha')^{1-\alpha}e\left(\frac{\Gamma\left(Q\right)}{\Gamma\left(\frac{Q}{\alpha'}\right)}\right)^{\frac{\alpha}{Q}}\\&
       \simeq Q^{-1}(\alpha')^{1-\alpha}e\left(\frac{e^{-Q}Q^{Q-\frac{1}{2}}}{e^{-\frac{Q}{\alpha'}}\left(\frac{Q}{\alpha'}\right)^{\frac{Q}{\alpha'}-\frac{1}{2}}}\right)^{\frac{\alpha}{Q}}\\&
       =(\alpha')^{-\frac{\alpha}{2Q}}\\&
       \stackrel{Q\rightarrow \infty}\rightarrow 1.
    \end{split}
\end{equation}
\end{proof}

\end{document}